\title[Reduction to t.d.l.c.s.c. groups]{Kirchberg-Wassermann exactness vs exactness: reduction to the unimodular totally disconnected case}
\author{Chris Cave}
\address{Department of Mathematical Sciences, University of Copenhagen,
\newline Universitetsparken 5, DK-2100 Copenhagen \O, Denmark}
\email{chris.cave@math.ku.dk}
\author{Joachim Zacharias}
\address{School of Mathematics and Statistics, University of Glasgow,
\newline  University Place, Glasgow G12 8QQ, Scotland}
\email{joachim.zacharias@glasgow.ac.uk}
\thanks{The first author is supported by the Danish National Research Foundation through the Centre for Symmetry and Deformation (DNRF92).}
\begin{document}

\begin{abstract}
We show that in order to prove that every second countable locally compact groups with exact reduced group C*-algebra is exact in the dynamical sense (i.e. KW-exact) it suffices to show this for totally disconnected groups. 
\end{abstract}

 \maketitle
 
 \section{Introduction}
 There are two natural notions of exactness for locally compact groups which to our knowledge were first mentioned by Kirchberg in  \cite{KirchbergICM94}. A weak one, called here $C^*$-exactness which says that the reduced group algebra is an exact $C^*$-algebra, and a strong one called KW-exactness, which asserts that given any exact sequence of dynamical systems over the group the corresponding sequence of reduced crossed products is exact. The stronger exactness property can thus be  regarded as a dynamical form of exactness. Here  KW stands for Kirchberg and Wassermann who introduced and studied these notions in \cite{Kirchberg1999} and \cite{Kirchberg1999a}.  Since the crossed product by trivial actions is just the tensor product by the reduced group algebra it is evident that KW-exactness implies $C^*$-exactness. As announced in \cite{KirchbergICM94} and later proved in \cite{Kirchberg1999}, the two concepts are equivalent for discrete groups but whether the same equivalence holds true in the case of general second countable locally compact groups has been an open problem ever since. 
 
 Note that there are numerous other concepts related to exactness  such as amenability at infinity or the non-existence of non-compact ghost operators, which have been studied and put forward in the past decades (\cite{Anantharaman-Delaroche2002, Roe2014, Willett2009}). There has been considerable recent progress in the understanding of these conditions showing that  they are equivalent to KW-exactness (\cite{Brodzki2017}). In view of those developments the  question of equivalence of KW-exactness and $C^*$-exactness appears more pressing than ever.
 
 In this note we do not answer this question but reduce the problem to the case when the group is unimodular and totally disconnected.  Thus if $C^*$-exact but non KW-exact groups exist then there must also exist totally disconnected unimodular such groups. This had already been suspected by experts (see the introduction of  \cite{Anantharaman-Delaroche2002}).

 \section*{Acknowledgements}
 We would like to thank Kang Li and Sven Raum for multiple stimulating discussions.
 
 \section{Preliminaries}
 \subsection{$C^*$-exactness and KW-exactness}
 
 As is well-known a $C^*$-algebra $A$  is exact if for any exact sequence  
 \[
   0 \to I  \to E \to Q \to 0
  \]
 of $C^*$-algebras the sequence of minimal tensor products 
  \[
   0 \to A \otimes I  \to A \otimes E \to A \otimes Q \to 0
  \]
is exact. Kirchberg and partly Wassermann proved that this property is equivalent to nuclear embeddability and passes to subalgebras and quotients (c.f. \cite[Chapter 10]{Brown2008}). Exactness of the second sequence can only fail in the middle. That is the kernel of the map onto $A \otimes Q$ is strictly larger than $ A \otimes I$. It is easy to check directly from the definition that a minimal tensor product $A \otimes B$ of two $C^*$-algebras is exact iff $A$ and $B$ are exact (\cite[Proposition 10.2.7]{Brown2008}).
 \begin{defn}
  Let $G$ be a locally compact group. Then $G$ is said to be \emph{$C^*$-exact} if $C^*_r(G)$ is an exact $C^*$-algebra.
 \end{defn}
 If $A$ is a $C^*$-algebra and $G$ is a locally compact group acting on $A$ by $\alpha \colon G \to \mathrm{Aut}(A)$ then the action $\alpha$ is called \emph{continuous} if for all $a \in A$, the map $g \mapsto \alpha_g(a)$ is norm continuous.
 
 \begin{defn}
  Let $G$ be a locally compact group. $G$ is said to be \emph{KW-exact} (KW for Kirchberg and Wassermann) if for all $C^*$-algebras $A$ and all continuous actions $\alpha \colon G \to \mathrm{Aut}(A)$ and for all closed two-sided ideals $I \unlhd A$ such that $\alpha_g(I) = I$ for all $g \in G$, the  sequence
  \[
   0 \to I \rtimes_{\alpha,r} G \to A \rtimes_{\alpha,r} G \to A/I \rtimes_{\alpha,r} G \to 0
  \]
  is exact.
 \end{defn}
 By recent results in \cite{Roe2014, Brodzki2017} it is now known that it suffices to check exactness of only one such sequence, that is, $G$ is KW-exact iff  
 $$
 0 \to C_0(G) \rtimes_{L,r} G \to C_b^{lu} (G) \rtimes_{L,r} G \to  (C_b^{lu} (G)/  C_0(G))\rtimes_{L,r} G \to 0
 $$
 is exact,  where $L$ is the left translation action on the $C^*$-algebra of bounded left uniformly continuous functions $C_b^{lu} (G)$ on $G$.
 
 As already mentioned, since $A \otimes C^*_r(G) \cong A \rtimes_{\tau,r}G$ where $\tau \colon G \to \mathrm{Aut}(A)$ is the trivial action, we have: 
 \begin{prop}
  If $G$ is KW-exact then it is $C^*$-exact.
 \end{prop}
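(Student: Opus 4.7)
The strategy is straightforward since the excerpt already flags the main identification: for any $C^*$-algebra $A$ equipped with the trivial $G$-action $\tau$, one has a canonical isomorphism $A \rtimes_{\tau,r} G \cong A \otimes C^*_r(G)$. The plan is to feed an arbitrary exact sequence of $C^*$-algebras into this isomorphism and let KW-exactness do the work.

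Concretely, I would start with an arbitrary short exact sequence
\[
0 \to I \to E \to Q \to 0
\]
of $C^*$-algebras. Equip each of $I$, $E$, $Q$ with the trivial $G$-action $\tau$. Since $\tau_g = \mathrm{id}$, the condition $\tau_g(I)=I$ for all $g \in G$ is automatic, so $I$ is a $G$-invariant closed two-sided ideal in $E$ with quotient $Q$. By hypothesis $G$ is KW-exact, hence
\[
0 \to I \rtimes_{\tau,r} G \to E \rtimes_{\tau,r} G \to Q \rtimes_{\tau,r} G \to 0
\]
is exact.

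Next I would invoke the natural isomorphism $B \rtimes_{\tau,r} G \cong B \otimes C^*_r(G)$, valid for every $C^*$-algebra $B$ carrying the trivial action, and check that this isomorphism is functorial in $B$. That is, for any equivariant $*$-homomorphism $\varphi\colon B_1 \to B_2$ (which, since both actions are trivial, is just any $*$-homomorphism), the induced map on reduced crossed products is sent to $\varphi \otimes \mathrm{id}_{C^*_r(G)}$ under the identification. Applied to the inclusion $I \hookrightarrow E$ and the quotient $E \twoheadrightarrow Q$, this translates the displayed exact sequence into
\[
0 \to I \otimes C^*_r(G) \to E \otimes C^*_r(G) \to Q \otimes C^*_r(G) \to 0,
\]
which is therefore exact. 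As the sequence $0 \to I \to E \to Q \to 0$ was arbitrary, this shows $C^*_r(G)$ is exact, i.e.\ $G$ is $C^*$-exact.

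The only non-trivial step is the functoriality of the isomorphism $B \rtimes_{\tau,r} G \cong B \otimes C^*_r(G)$, but this is standard: on the dense subalgebra $C_c(G,B) \cong B \odot C_c(G)$ the isomorphism is the obvious algebraic identification, and it is compatible with $\varphi \odot \mathrm{id}$ for any $*$-homomorphism $\varphi$, hence extends to the reduced $C^*$-completions. There is no genuine obstacle in the argument; the content of the proposition is essentially a one-line consequence of the definitions together with this identification of trivial crossed products with tensor products.
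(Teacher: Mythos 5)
Your proof is correct and follows the same route as the paper: the paper disposes of this proposition in one line via the identification $A \rtimes_{\tau,r} G \cong A \otimes C^*_r(G)$ for the trivial action, and you have simply spelled out the functoriality details that the paper leaves implicit.
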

 
 KW-exactness satisfies the following permanence properties.
 \begin{prop}
  Let $G$ be a locally compact group.
  \begin{enumerate}
  \item If $G$ is amenable then $G$ is KW-exact \cite[Proposition 6.1]{Kirchberg1999a}.
  \item If $G$ is connected then $G$ is KW-exact \cite[Theorem 6.8]{Kirchberg1999a}.
    \item Let $N \trianglelefteq G$ be a closed normal subgroup. If $N$ and $G/N$ are KW-exact then $G$ is KW-exact \cite[Theorem 5.1]{Kirchberg1999a}.
 \end{enumerate}
 \end{prop}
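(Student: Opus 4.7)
The three items are rather different in flavour, so my plan is to handle them separately, in roughly increasing order of difficulty. For (1), the idea is that amenability of $G$ forces $A \rtimes_\alpha G = A \rtimes_{\alpha, r} G$ for every $G$-C*-algebra $(A, \alpha)$. This reduction is standard: using an approximate invariant mean on $G$ one builds, via compactly supported positive-type functions, a u.c.p.\ inverse to the canonical surjection from the full to the reduced crossed product. Once full and reduced crossed products coincide, the claim follows from the fact that the \emph{full} crossed product functor is exact on equivariant short exact sequences, which is a formal consequence of its universal property.

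For (3), the plan is to decompose crossed products along $N \trianglelefteq G$ into iterated crossed products. A Green-type theorem supplies, for every $G$-C*-algebra $(A,\alpha)$, a natural equivariant isomorphism $A \rtimes_{\alpha,r} G \cong (A \rtimes_{\alpha|_N, r} N) \rtimes_r G/N$, where $G/N$ acts through its canonical (possibly twisted) conjugation action on $A \rtimes_{\alpha|_N,r} N$. Given an invariant ideal $I \unlhd A$, KW-exactness of $N$ yields exactness of
\[
0 \to I \rtimes_{\alpha, r} N \to A \rtimes_{\alpha, r} N \to (A/I) \rtimes_{\alpha, r} N \to 0;
\]
this is again an equivariant short exact sequence of $G/N$-C*-algebras, and KW-exactness of $G/N$ applied to it gives exactness of the $G/N$-crossed products, which reassembles into the desired exact sequence for $G$. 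The subtlety is verifying the Green decomposition at the reduced level and tracking ideal-invariance through the iteration, but no genuinely new input is needed.

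Part (2) is where the main work sits. The strategy is to exploit the structure theory of connected locally compact groups. By the Gleason--Yamabe (Montgomery--Zippin) theorem, every connected locally compact group has arbitrarily small compact normal subgroups $K$ with $G/K$ a Lie group; combining this with (1) (compact groups are amenable) and (3) reduces the problem to connected Lie groups. A connected Lie group admits a Levi decomposition $G = R \rtimes S$ with $R$ the solvable (hence amenable) radical, so (3) further reduces us to the connected semisimple case. For those, the plan is to use the Iwasawa decomposition $G=KAN$: the minimal parabolic $P=MAN$ is amenable and cocompact in $G$, which makes the $G$-action on the homogeneous space $G/P$ topologically amenable, and one extracts KW-exactness from this amenability of a boundary action. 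This last step is the genuine obstacle: the structural reductions are routine, but transferring amenability of one specific action to exactness of \emph{all} equivariant reduced crossed product sequences requires the Anantharaman-Delaroche machinery relating amenable actions, nuclearity of crossed products, and exactness, and is where I would expect to spend the bulk of the effort.
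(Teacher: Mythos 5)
The paper itself offers no proof of this proposition: all three items are quoted directly from Kirchberg--Wassermann \cite{Kirchberg1999a} (Proposition 6.1, Theorem 6.8, Theorem 5.1), so your sketch is in effect a reconstruction of the original arguments rather than an alternative to anything in this note. As such a reconstruction it is broadly sound, with one step undersold. Item (1) as you describe it is the standard proof. For item (3) your route is the same as Kirchberg and Wassermann's, but the parenthetical ``possibly twisted'' is exactly where the content sits: $G/N$ does not act on $A \rtimes_{\alpha|_N,r} N$ in general --- only a Green twisted action of the pair $(G,N)$ exists --- so KW-exactness of $G/N$, which is a statement about ordinary actions, cannot be applied directly to the second stage of the iteration. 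One must untwist, e.g.\ by the Packer--Raeburn stabilization trick (twisted crossed products are stably isomorphic to ordinary ones, compatibly with invariant ideals), or work throughout with twisted covariance algebras as in \cite{Kirchberg1999a}; saying ``no genuinely new input is needed'' glosses over this genuinely necessary ingredient. For item (2) the structural reductions are correct in spirit, though the global statement $G = R \rtimes S$ is not quite right for a general connected Lie group; all you actually need is that the solvable radical $R$ is closed, normal and amenable with $G/R$ connected semisimple, and then item (3) applies, so this is harmless. The final transfer --- a closed amenable cocompact subgroup $P \leq G$ makes the $G$-action on the compact space $G/P$ amenable, and amenability at infinity yields KW-exactness --- is indeed the crux; it is supplied by the machinery of \cite{Anantharaman-Delaroche2002} (Kirchberg and Wassermann's original proof of their Theorem 6.8 treats this case by a direct argument), and you correctly identify it as the nontrivial input rather than proving it. In short: a faithful outline of the cited proofs, with the untwisting step in (3) the one place where you claim more ease than the argument actually allows.
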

Given a subgroup $H$ of a locally compact group $G$, elements in $H$ and $C_r^*(H)$ only act as multipliers on $C_r^*(G)$. However if $H$ is open in $G$ then it is easy to see that $C_r^*(H) \subset C_r^*(G)$. Since exactness passes to subalgebras we get.
 \begin{prop}
If $G$ is a locally compact $C^*$-exact group and $H \leq G$ is an open subgroup then $C^*_r(H) \hookrightarrow C^*_r(G)$ is an injective $*$-homomorphism and so $C^*_r(H)$ is also exact.
\end{prop}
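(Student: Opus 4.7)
The statement has two ingredients: construct an injective $*$-homomorphism $C^*_r(H)\hookrightarrow C^*_r(G)$, and then invoke that exactness passes to $C^*$-subalgebras (a standard result already recorded in the excerpt). All the content therefore lies in the first half. The plan is to realise the inclusion at the level of the regular representations.

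Because $H$ is open in $G$, the restriction to $H$ of a left Haar measure $\mu$ on $G$ is a left Haar measure on $H$, and any $f\in C_c(H)$ extended by zero lies in $C_c(G)$. A direct check shows that the convolution and the $*$-involution on $H$ and on $G$ agree on such functions, giving a $*$-homomorphism $C_c(H)\to C_c(G)$. To upgrade this to an isometric embedding of $C^*_r(H)$, I would fix representatives $\{g_i\}_{i\in I}$ for the right $H$-cosets, so that $G=\bigsqcup_i Hg_i$ is a disjoint union of open sets and $L^2(G,\mu)=\bigoplus_i L^2(Hg_i,\mu)$. Each summand is invariant under $\lambda_G|_H$, and right translation $h\mapsto hg_i$ identifies $L^2(Hg_i)$ with $L^2(H)$ up to the modular factor $\Delta_G(g_i)^{1/2}$; absorbing this factor into a unitary $U_i:L^2(H)\to L^2(Hg_i)$ yields intertwiners for $\lambda_H$ and $\lambda_G|_H$ on each summand. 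Thus $\lambda_G|_H\cong\bigoplus_{i\in I}\lambda_H$.

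It follows that $\|\lambda_G(f)\|=\|\lambda_H(f)\|$ for every $f\in C_c(H)$, so the map extends to an isometric $*$-homomorphism $C^*_r(H)\hookrightarrow C^*_r(G)$; exactness of $C^*_r(H)$ is then inherited from $C^*_r(G)$ as a $C^*$-subalgebra. The only mildly subtle point is the bookkeeping with the modular function when defining $U_i$ in the non-unimodular case; with that handled, injectivity is immediate and the rest of the argument is routine.
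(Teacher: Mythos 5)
Your argument is correct and is essentially the standard verification of the inclusion $C^*_r(H)\subset C^*_r(G)$ for an open subgroup, which the paper simply asserts as ``easy to see'' before invoking that exactness passes to subalgebras. The zero-extension $C_c(H)\to C_c(G)$ (legitimate because $\mu_G|_H$ is a Haar measure on $H$, which also forces $\Delta_H=\Delta_G|_H$ so the involutions and convolutions match), the decomposition $\lambda_G|_H\cong\bigoplus_i\lambda_H$ over the open right cosets $Hg_i$, and the modular factor $\Delta_G(g_i)^{1/2}$ absorbed into the unitaries $U_i$ are all handled correctly, so the map is isometric on $C_c(H)$ and the conclusion follows.
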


\subsection{Structure of locally compact groups}
The following proposition follows from the closure properties of the class of amenable locally compact groups. We indicate the proof for the reader's convenience.
\begin{prop} [{\cite[Proposition 4.1.12]{Zimmer1984}}]
 Every locally compact group $G$ has a unique maximal amenable closed normal subgroup.
\end{prop}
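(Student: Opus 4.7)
The plan is to assemble a maximal amenable closed normal subgroup as the closure of the union of all such subgroups, using the stability of the class of amenable locally compact groups under extensions and under directed unions. Concretely, let $\mathcal{F}$ denote the collection of all closed amenable normal subgroups of $G$, partially ordered by inclusion. If I can show that $\mathcal{F}$ is upward directed and that the closure of the union of any directed subfamily of $\mathcal{F}$ again belongs to $\mathcal{F}$, then $R := \overline{\bigcup_{N \in \mathcal{F}} N}$ will be a closed amenable normal subgroup containing every member of $\mathcal{F}$; this simultaneously gives existence of a maximal such subgroup and its uniqueness.

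First, I would verify that $\mathcal{F}$ is directed. Given $N_1, N_2 \in \mathcal{F}$, set $M := \overline{N_1 N_2}$. The product $N_1 N_2$ is a normal subgroup of $G$ (since each factor is), and since inversion and conjugation are continuous, normality passes to the closure, so $M$ is a closed normal subgroup of $G$ containing both $N_1$ and $N_2$. To see that $M$ is amenable, I would study the quotient $M/N_1$: the composition $N_2 \hookrightarrow M \twoheadrightarrow M/N_1$ is a continuous homomorphism from an amenable locally compact group with dense range, since $N_1N_2$ is dense in $M$. Because amenability passes to the closure of a continuous amenable image inside a locally compact group, $M/N_1$ is amenable. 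Combined with amenability of $N_1$, the extension closure property then forces $M$ to be amenable, so $M \in \mathcal{F}$.

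Second, for the directed-union step, I would invoke the standard fact that the closure of an increasing directed union of closed amenable subgroups of a locally compact group is again amenable; one can see this by taking a weak-$*$ cluster point of invariant means for the members of the family, producing an invariant mean on $L^\infty$ of the closure. Applied to the directed family $\mathcal{F}$, this yields amenability of $R$, and $R$ is manifestly closed and normal, so $R \in \mathcal{F}$ is the desired maximal element and is unique by construction.

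The only point that is not entirely formal is the assertion in the first step that the closure of a continuous amenable image inside a locally compact group is amenable, together with the directed-union property; these are precisely the closure properties of the class of amenable locally compact groups alluded to in the statement, and I would simply cite them (e.g.\ from \emph{loc.\ cit.}\ or from the standard references on amenability) rather than reprove them here.
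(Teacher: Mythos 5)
Your proposal is correct and takes essentially the same route as the paper: reduce the problem to showing the family of closed amenable normal subgroups is directed, then take the closure of its (directed) union, citing the standard closure properties of amenability for locally compact groups. The only cosmetic difference is in the pairwise join: the paper maps the semidirect product $H_1 \rtimes H_2$ continuously onto a dense subgroup of the join, whereas you show $M/N_1$ is amenable as the closure of the dense image of $N_2$ and then invoke closure under extensions --- the same two hereditary properties applied in a different order.
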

\begin{proof} Since unions of directed systems of amenable subgroups of $G$ are amenable one only needs to show that given two closed normal amenable subgroups $H_1$ and $H_2$ the closed subgroup $H$ generated by them is amenable. Now the semidirect product $H_1 \rtimes H_2$ is amenable and $H$ is the closure of the continuous image of   $H_1 \rtimes H_2$. This implies that $H$ is also amenable.
\end{proof}

\begin{defn}
 Let $G$ be a locally compact group. Then the \emph{amenable radical}, denoted by $\mathrm{Rad}(G)$ is the unique maximal amenable closed normal subgroup of $G$.
\end{defn}
We have the following characterisation of totally disconnected  locally compact groups which is a classical result by van Danzig.
 \begin{thm}[{\cite{VanDantzig1936}}]
  Let $G$ be a locally compact group. Then $G$ is totally disconnected if and only if it admits a neighbourhood basis of the identity consisting of compact open subgroups.
 \end{thm}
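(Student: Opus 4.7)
The plan is to prove the two directions of van Dantzig's theorem separately, with essentially all the work in the forward direction. The backward direction is a clopen-sets argument, and the forward direction combines the structure of compact totally disconnected spaces with a continuity trick that upgrades a neighbourhood of the identity into a subgroup.

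For the easy implication, suppose $G$ admits a neighbourhood basis $\{U_\alpha\}$ at $e$ consisting of compact open subgroups. Every open subgroup of a topological group is automatically closed, since its complement is a disjoint union of open cosets, so each $U_\alpha$ is clopen. Consequently, if $C$ denotes the connected component of $e$, then $C \cap U_\alpha$ is a nonempty clopen subset of the connected set $C$, forcing $C \subset U_\alpha$ for every $\alpha$. Since $\{U_\alpha\}$ is a basis and $G$ is Hausdorff, $\bigcap_\alpha U_\alpha = \{e\}$, whence $C = \{e\}$ and $G$ is totally disconnected.

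For the hard implication, fix an arbitrary neighbourhood $N$ of $e$. I would first produce a compact open neighbourhood $U \subset N$ of $e$. Using local compactness, choose a compact neighbourhood $V \subset N$ of $e$; as a compact totally disconnected Hausdorff space, $V$ carries a basis of clopen subsets, so one can find a set $U$ that is clopen in $V$, contains $e$, and lies in $\mathrm{int}(V)$. Such a $U$ is closed in $G$ (closed in the closed set $V$) and open in $G$ (open in the open set $\mathrm{int}(V)$), so it is the desired compact open neighbourhood.

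The main obstacle, and the crux of the argument, is to shrink $U$ to an actual subgroup. I would exploit joint continuity of multiplication together with compactness of $U$: the preimage of $U$ under $(g,u) \mapsto gu$ is open in $G \times G$ and contains $\{e\} \times U$, and a tube-lemma argument then yields a symmetric open neighbourhood $W$ of $e$ with $W \subset U$ and $WU \subset U$. Induction gives $W^n \subset U$ for every $n \geq 1$, so the subgroup $H = \bigcup_{n \geq 1} W^n$ generated by $W$ is contained in $U$. Because $H$ contains the neighbourhood $W$, it is open; an open subgroup is automatically closed; and as a closed subset of the compact set $U$, it is compact. Hence $H \subset N$ is a compact open subgroup, completing the proof.
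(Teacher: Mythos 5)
Your proposal is correct, and it is the classical proof of van Dantzig's theorem; note that the paper itself gives no proof at all, citing the result to van Dantzig, so there is nothing in the text to diverge from. Both halves of your argument are sound: the clopen-subgroup argument for the easy direction (one should add the one-line remark that triviality of the identity component forces all components to be singletons, by translation homogeneity), and the standard forward argument of (i) extracting a compact open neighbourhood $U\subset N$ of $e$ and (ii) shrinking it to the open subgroup generated by a symmetric $W$ with $WU\subset U$, which lies in $U$ and is therefore compact. The only place where you import a genuinely nontrivial fact is step (i): that a compact totally disconnected Hausdorff space has a basis of clopen sets, i.e.\ that connected components and quasi-components coincide in compact Hausdorff spaces. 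That is where the real point-set content of van Dantzig's theorem sits, and citing it as a standard fact is legitimate, but a fully self-contained write-up would either prove it or give a precise reference. The tube-lemma step, the induction $W^{n}\subset U$, and the conclusion that the open subgroup $H=\bigcup_{n\ge 1}W^{n}$ is closed and hence compact are all correct as stated.
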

 
 We use the following structure theorem of locally compact groups which is deduced from a solution to Hilbert's fifth problem \cite[Theorem 4.6]{Montgomery1955}. Recall that a subgroup $H \leq G$ is \emph{characteristic} if it is preserved under every automorphism in $\mathrm{Aut}(G)$. 
  
 \begin{thm}[{\cite[Theorem 3.3.3]{Burger2002}, \cite[Theorem 23]{Gheysens2017}}] \label{thm:structure theorem on quotient by amenable radical}
  Let $G$ be any locally compact group. The quotient group $G / \mathrm{Rad}(G)$ has a finite index open characteristic subgroup which splits as a direct product $S \times D$  where $S$ is a connected semi-simple Lie group and $D$ is totally disconnected.
 \end{thm}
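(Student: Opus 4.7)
The plan is to work inside $H := G/\mathrm{Rad}(G)$ and to recover the claimed decomposition from the conjugation action of $H$ on its identity component. First I would observe that $H$ has trivial amenable radical: any closed amenable normal subgroup $A \trianglelefteq H$ would lift to a closed normal subgroup of $G$ sitting in an extension of $A$ by $\mathrm{Rad}(G)$, hence amenable, contradicting the maximality of $\mathrm{Rad}(G)$.

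Next I would pin down the structure of $H^\circ$. By the Yamabe--Montgomery--Zippin solution of Hilbert's fifth problem, $H^\circ$ contains a compact normal subgroup $K$ with $H^\circ/K$ a Lie group. Since $K$ is characteristic in $H^\circ$, it is normal in $H$ and amenable, hence $K = 1$ and $H^\circ$ is already a connected Lie group. The same reasoning applied to the solvable radical of $H^\circ$, to the product of its compact simple factors, and to its centre $Z(H^\circ)$ -- each being closed, amenable, and characteristic in $H^\circ$ -- shows all three are trivial. Thus $H^\circ$ is a connected, centreless, semisimple Lie group without compact factors.

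For the splitting I would use the continuous conjugation homomorphism $\phi \colon H \to \mathrm{Aut}(H^\circ)$. Under the hypotheses above every derivation of the Lie algebra of $H^\circ$ is inner, so $\mathrm{Inn}(H^\circ) = H^\circ/Z(H^\circ) = H^\circ$ coincides with the identity component of the Lie group $\mathrm{Aut}(H^\circ)$; moreover $\mathrm{Out}(H^\circ)$ embeds into the (finite) symmetry group of the Dynkin diagram of $H^\circ$ and is therefore finite. Composing $\phi$ with the quotient $\mathrm{Aut}(H^\circ) \to \mathrm{Out}(H^\circ)$ yields a continuous map $\psi \colon H \to \mathrm{Out}(H^\circ)$ to a finite discrete group, so $N := \ker \psi$ is open of finite index in $H$, and $N$ is characteristic because the entire construction is canonical. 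A direct unwinding identifies $N$ with $H^\circ \cdot Z_H(H^\circ)$: an element $h$ lies in $N$ iff $c_h|_{H^\circ} = c_{h_0}|_{H^\circ}$ for some $h_0 \in H^\circ$, i.e.\ $h_0^{-1} h \in Z_H(H^\circ)$. Setting $D := Z_H(H^\circ)$, one has $H^\circ \cap D = Z(H^\circ) = 1$, so $N = H^\circ \times D$ (the product is algebraically direct since the factors centralise each other, and topologically direct by the open mapping theorem applied to the continuous bijection $H^\circ \times D \to N$). Finally, $D^\circ$ commutes with $H^\circ$, so $H^\circ \cdot D^\circ$ is a connected subgroup of $H$ containing $H^\circ$; since $H^\circ$ is the identity component this forces $D^\circ \subseteq H^\circ$, and combining with $D^\circ \subseteq D$ gives $D^\circ \subseteq Z(H^\circ) = 1$. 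Hence $D$ is totally disconnected and $S := H^\circ$ together with $D$ furnishes the required splitting.

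I expect the main obstacle to be the appeal to Hilbert's fifth problem, which is what upgrades $H^\circ$ from a connected locally compact group to a Lie group; that is the genuinely deep input. Once it is granted, the rest amounts to routine bookkeeping with the automorphism theory of connected semisimple Lie groups, together with the observation that every characteristic subgroup of $H^\circ$ is automatically a normal amenable subgroup of $H$ and therefore trivial.
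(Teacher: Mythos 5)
The paper does not actually prove this theorem---it is quoted from Burger--Monod and Gheysens--Monod---so the comparison is with those proofs, and your sketch follows essentially the same standard strategy: trivial amenable radical of $H=G/\mathrm{Rad}(G)$, identification of $H^{\circ}$ as an adjoint semisimple Lie group via Hilbert's fifth problem, and the open finite-index kernel of $H\to\mathrm{Out}(H^{\circ})$, which is $H^{\circ}\cdot Z_H(H^{\circ})\cong H^{\circ}\times Z_H(H^{\circ})$ with totally disconnected centraliser. The overall architecture is right and most steps are correct (triviality of $\mathrm{Rad}(H)$, the identification $N=H^{\circ}\cdot Z_H(H^{\circ})$, the fact that $N$ is characteristic, and $D^{\circ}\subseteq H^{\circ}\cap D=1$).

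Two steps need repair. First, a compact normal subgroup $K\trianglelefteq H^{\circ}$ with Lie quotient, as produced by Yamabe/Montgomery--Zippin, is \emph{not} in general characteristic in $H^{\circ}$ (think of a coordinate subtorus of an infinite product of circles), so you cannot conclude directly that $K$ is normal in $H$. The clean fix: $\mathrm{Rad}(H^{\circ})$ \emph{is} preserved by all topological automorphisms of $H^{\circ}$, hence is a closed amenable normal subgroup of $H$ and therefore trivial; then \emph{every} closed amenable normal subgroup of $H^{\circ}$---the compact $K$, the solvable radical, the centre, the product of compact factors---is trivial, with no characteristicity claims needed beyond that one. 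Second, your appeal to the open mapping theorem to see that the continuous bijection $H^{\circ}\times D\to N$ is a homeomorphism requires $\sigma$-compactness of the domain; this is harmless in the second countable setting in which the paper applies the theorem, but the statement is for arbitrary locally compact $G$, and $D=Z_H(H^{\circ})$ need not be $\sigma$-compact. This is avoidable: the continuous homomorphism $p\colon N\to\mathrm{Inn}(H^{\circ})\cong H^{\circ}$ (restriction of conjugation) satisfies $p|_{H^{\circ}}=\mathrm{id}$ and $\ker p=D$, so $n\mapsto\bigl(p(n),\,p(n)^{-1}n\bigr)$ is a continuous inverse to multiplication, giving the topological splitting directly. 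Finally, two points you gloss over are genuinely technical but standard and handled in the cited references: continuity of $H\to\mathrm{Aut}(H^{\circ})$ for the Lie topology on $\mathrm{Aut}(H^{\circ})$, and finiteness of $\mathrm{Out}(H^{\circ})$ for a real adjoint semisimple group (the Dynkin-diagram description is literally correct only in the complex case; finiteness follows, e.g., from $\mathrm{Aut}(\mathfrak{g})$ being real algebraic with identity component $\mathrm{Inn}(\mathfrak{g})$).
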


 \section{Reduction to the unimodular totally disconnected locally compact second countable case}
 
The aim of this section is to prove the following theorem.

\begin{thm} \label{thm: main result of reduction to tdlc case}
 If KW-exactness and $C^*$-exactness are equivalent for all unimodular totally disconnected second countable groups then they are equivalent for all locally compact second countable groups.
\end{thm}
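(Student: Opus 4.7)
The plan is to reduce $G$ step by step using the structure theorem and the three permanence properties of KW-exactness (amenable implies KW, connected implies KW, and the two-out-of-three property for closed normal extensions). Throughout, let $G$ be a $C^*$-exact second countable locally compact group and assume the hypothesis of the theorem; I aim to show $G$ is KW-exact (the converse already being recorded in the preliminaries).

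The reduction proceeds in four layers. In the first layer, kill the amenable radical: $\mathrm{Rad}(G)$ is amenable and hence KW-exact by (1), and the quotient $G/\mathrm{Rad}(G)$ is again $C^*$-exact because for an amenable closed normal subgroup $N$ the regular representation of $G/N$ is weakly contained in that of $G$, giving a surjection $C^*_r(G) \twoheadrightarrow C^*_r(G/N)$ along which exactness descends. By (3), it suffices to show $G/\mathrm{Rad}(G)$ is KW-exact. In the second layer, pass to the open finite-index subgroup supplied by Theorem \ref{thm:structure theorem on quotient by amenable radical}: $G/\mathrm{Rad}(G)$ contains an open characteristic (hence normal) subgroup $H \cong S \times D$ of finite index, with $S$ a connected semisimple Lie group and $D$ totally disconnected. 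The quotient is finite, hence amenable and KW-exact; $H$ is $C^*$-exact by the open-subgroup proposition; and (3) reduces the problem to KW-exactness of $H$.

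In the third layer, split the product. $S$ is connected, hence KW-exact by (2); exactness of the minimal tensor product $C^*_r(S) \otimes C^*_r(D) \cong C^*_r(S \times D)$ forces each factor to be exact, so $D$ is $C^*$-exact; and (3) applied to the normal subgroup $S$ of $H$ reduces the problem to KW-exactness of $D$. In the fourth and final layer, make $D$ unimodular. The key new observation is that the modular function $\Delta_D \colon D \to \mathbb{R}^+$ has open kernel: $\mathbb{R}^+$ contains no non-trivial compact subgroup, so $\Delta_D$ vanishes on every compact subgroup of $D$, and van Dantzig's theorem supplies a compact open subgroup. Hence $\ker \Delta_D$ is open in $D$, still totally disconnected and $C^*$-exact (by the open-subgroup proposition); since its Haar measure is the restriction of Haar measure on $D$, one gets $\Delta_{\ker \Delta_D} = \Delta_D|_{\ker \Delta_D} = 1$, so it is unimodular; and it is automatically second countable. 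The quotient $D/\ker\Delta_D$ embeds continuously in $\mathbb{R}^+$ and is therefore abelian, hence KW-exact. The hypothesis now applies to give KW-exactness of $\ker \Delta_D$, and (3) propagates KW-exactness back through $D$, $H$, $G/\mathrm{Rad}(G)$, and finally $G$.

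The main obstacle is the $C^*$-exactness bookkeeping, specifically the descent in the first layer to $G/\mathrm{Rad}(G)$, which relies on Hulanicki's criterion for amenability in terms of weak containment and is the only input not already recorded in the preliminaries; the remaining propagation steps are direct consequences of the open-subgroup proposition, the standard characterisation of exactness for minimal tensor products, and the restriction-of-Haar-measure formula for open subgroups. Second countability is preserved automatically at each stage, since closed subgroups, quotients by closed normal subgroups, and open subgroups of second countable groups are second countable.
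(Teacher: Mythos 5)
Your proposal is correct and follows essentially the same route as the paper: pass to the quotient by the amenable radical (using the weak-containment argument that yields a surjection $C^*_r(G) \twoheadrightarrow C^*_r(G/\mathrm{Rad}(G))$, which is the paper's Lemma \ref{lem:closed normal amenable subgroup C star exact}), then to the open finite-index subgroup $S \times D$ from Theorem \ref{thm:structure theorem on quotient by amenable radical}, split off the connected factor, and finally reduce $D$ to the open unimodular subgroup $\ker \Delta_D$ via van Dantzig, exactly as in Lemma \ref{lem:unimodular tdlcsc}. The only cosmetic deviation is at the finite-index step, where you invoke the extension permanence property with the finite (hence amenable) quotient rather than the paper's appeal to cocompactness of $N$; both are valid, and yours stays within the permanence properties listed in the preliminaries.
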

\subsection{Induced representations and weak containment}
\subsubsection{Induced representations}
Let $G$ be a locally compact group and $H \leq G$ a closed subgroup. For a Borel measure $\nu$ on $G/H$ and $g \in G$, denote $\nu_g$ to be the measure defined as $\nu_g(E) = \nu(gE)$ for all Borel sets $E \subset G/H$. A regular Borel measure $\nu$ is \emph{quasi-invariant} if $\nu_g \sim \nu$ for all $g \in G$ where $\sim$ denotes mutual absolute continuity of measures.

Let $\mu_H$ be a Haar measure on $H$ and define a mapping $T_H \colon C_c(G) \to C_c(G/H)$ where
\[
 T_H(f) (xH) = \int_H f(xh) \, d\mu_H(h).
\]
This map is surjective \cite[Lemma B.1.2]{BHV08}.
\begin{lem}[{\cite[Lemma B.1.3]{BHV08}}]
 Let $\rho \colon G \to \R^{>0}$ be a continuous function on $G$. Then the following are equivalent
 \begin{enumerate}
  \item for all $g \in G$ and $h \in H$ one has
  \[
   \rho(gh) = \frac{\Delta_H(h)}{\Delta_G(h)} \rho(g);
  \]
  \item The functional $\lambda_{\rho} \colon C_c(G/H) \to \C$ defined by
  \[
   \lambda_{\rho} \circ T_H (f) = \int_G f(g) \rho(g) \, d\mu_G
  \]
  is well-defined and positive.
 \end{enumerate}
 If the above conditions hold then the associated regular Borel measure $\mu_{\rho}$ to $\lambda_{\rho}$ under the Riesz representation is quasi-invariant with Radon--Nikodym derivative
 \[
  \frac{d (\mu_{\rho})_y}{d \mu_{\rho}}(xH) = \frac{\rho(yx)}{\rho(x)} \quad \forall x,y \in G.
 \]
\end{lem}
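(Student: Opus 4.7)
The plan is to prove the equivalence by a direct computation with Haar integrals and to read off the Radon--Nikodym derivative from the same calculation. Write $\pi\colon G\to G/H$ for the quotient map. The delicate content of $(1)\Rightarrow(2)$ is well-definedness: given $f\in C_c(G)$ with $T_H(f)=0$, one must show $\int_G f(g)\rho(g)\,d\mu_G=0$. This is where the $\rho$-transformation law must do its work, and I expect it to be the main obstacle. The trick will be a cutoff-and-Fubini argument: pick $\psi\in C_c(G)^+$ with $T_H(\psi)(xH)=1$ on the compact set $\pi(\mathrm{supp}(f))$, and compute the double integral
\[
I:=\int_G\int_H f(g)\psi(gh)\rho(g)\,d\mu_H(h)\,d\mu_G(g)
\]
in two orders. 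Performing the $H$-integral first gives $I=\int_G f(g)\rho(g)T_H(\psi)(gH)\,d\mu_G=\int_G f(g)\rho(g)\,d\mu_G$. Performing the $G$-integral first, substituting $g\mapsto gh^{-1}$ (which introduces $\Delta_G(h^{-1})$), invoking (1) in the form $\rho(gh^{-1})=\Delta_G(h)\Delta_H(h)^{-1}\rho(g)$, and finally substituting $h\mapsto h^{-1}$ on $H$ absorbs the remaining modular factor and yields $I=\int_G\psi(g)\rho(g)T_H(f)(gH)\,d\mu_G=0$. Positivity of $\lambda_\rho$ is then routine, since any non-negative $\phi\in C_c(G/H)$ lifts to a non-negative $f\in C_c(G)$ (e.g.\ $f(g)=\chi(g)\phi(gH)/T_H(\chi)(gH)$ for a suitable $\chi\in C_c(G)^+$).

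For $(2)\Rightarrow(1)$, fix $h_0\in H$ and $f\in C_c(G)$, and set $f_{h_0}(g):=f(gh_0)$. A right translation in $H$ gives $T_H(f_{h_0})=\Delta_H(h_0)^{-1}T_H(f)$, so $T_H(\Delta_H(h_0)f_{h_0}-f)=0$. Well-definedness then forces $\Delta_H(h_0)\int_G f(gh_0)\rho(g)\,d\mu_G=\int_G f(g)\rho(g)\,d\mu_G$; substituting $g\mapsto gh_0^{-1}$ on the left rewrites this as
\[
\int_G f(g)\Bigl(\tfrac{\Delta_H(h_0)}{\Delta_G(h_0)}\rho(gh_0^{-1})-\rho(g)\Bigr)\,d\mu_G=0
\]
for every $f\in C_c(G)$. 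Continuity of $\rho$ upgrades this to the pointwise identity, and replacing $g$ by $gh_0$ yields (1).

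For the final claim on quasi-invariance, observe first that by (1) the function $g\mapsto\rho(yg)/\rho(g)$ is right $H$-invariant, hence descends to a continuous positive function $R_y$ on $G/H$ for each $y\in G$. For $\phi=T_H(f)\in C_c(G/H)$ one computes, using left invariance of $\mu_G$ in the second equality,
\[
\int_{G/H}\phi(y^{-1}xH)\,d\mu_\rho(xH)=\int_G f(y^{-1}g)\rho(g)\,d\mu_G=\int_G f(g)\rho(yg)\,d\mu_G=\int_G f(g)R_y(gH)\rho(g)\,d\mu_G.
\]
Since $R_y\circ\pi$ is $H$-invariant, $T_H(f\cdot(R_y\circ\pi))=R_y\cdot T_H(f)$, so the last expression equals $\int_{G/H}R_y(xH)\phi(xH)\,d\mu_\rho(xH)$. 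Thus $(\mu_\rho)_y\sim\mu_\rho$ with $d(\mu_\rho)_y/d\mu_\rho(xH)=\rho(yx)/\rho(x)$, as claimed.
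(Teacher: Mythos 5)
Your proof is correct, and it is essentially the standard argument: the paper itself gives no proof (it imports the statement from \cite[Lemma B.1.3]{BHV08}), and your cutoff-and-Fubini computation for well-definedness, the translation trick $T_H(f(\cdot\,h_0))=\Delta_H(h_0)^{-1}T_H(f)$ for the converse, and the unfolding $\int_{G/H}\phi(y^{-1}xH)\,d\mu_\rho=\int_G f(g)\rho(yg)\,d\mu_G$ for the Radon--Nikodym derivative are exactly the steps of the cited proof. All modular-function bookkeeping checks out, so there is nothing to add.
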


Such a  function $\rho \colon G \to \R^{>0}$ is called a \emph{rho-function for the pair $(G,H)$}. For every pair $(G,H)$ there always exists a rho-function for $(G,H)$.
Indeed if $f \in C_c(G)_+$  then 
\[
\rho_f(x) = \int_H \Delta_G(h) \Delta_H(h)^{-1} f(xh) d\mu_H(h)
\]
is a continuous rho-function.
Thus there always exists a quasi-invariant regular Borel measure on $G / H$. In fact every quasi-invariant regular Borel measure is associated to a rho-function for $(G,H)$ \cite[Theorem B.1.4]{BHV08}. When $H$ is a closed normal subgroup then one takes $\rho = 1$ and the associated quasi-invariant regular Borel measure is the usual Haar measure on $G / H$.

Let $\pi \colon H \to \mcal{U}(\Hs)$ be a unitary representation and fix a quasi-invariant measure $\mu$ on $G /H$. Define a new Hilbert space
\[
 \Hs(\pi) = \Set{\xi \colon G \to \Hs | \xi(xh) = \pi(h^{-1}) \xi(x) \mbox{ and } \int_{G/H} \norm{\xi(x)}^2 \, d\mu(x) < \infty}
\]
with inner product given by
\[
 \Braket{\xi, \eta} = \int_{G/H} \Braket{\xi(x), \eta(x)} \, d\mu(x)
\]
The \emph{representation of $G$ induced from $\pi$} or simply the \emph{induced representation} is the representation $\mathrm{ind}_H^G \pi \colon G \to \mcal{U}(\Hs(\pi))$ given by
\[
 \mathrm{ind}_H^G \pi (x) \xi(y) = \left(\frac{d \mu_{x^{-1}}}{d \mu}(yH)\right)^{1/2} \xi(x^{-1} y) \quad \forall \xi \in \Hs(\pi) \ \forall x,y \in G.
\]
Given another quasi-invariant measure on $G/H$ the same construction gives a unitarily equivalent representation so we can call $\mathrm{ind}_H^G \pi$ the induced representation of $\pi$ to $G$ without trepidation \cite[Proposition E.1.5]{BHV08}.

When $H = \set{e}$ then $\mathrm{ind}_H^G( 1_H) = \lambda_G$. More generally, the representation $\mathrm{ind}_H^G (1_H)$ is called the \emph{quasi-regular representation of $G/H$}. If $H$ is normal in $G$ then $\mathrm{ind}_H^G (1_H)$ is unitarily equivalent to $\lambda_{G/H} \circ q$ where $q \colon G \to G/H$ is the natural surjection and $\lambda_{G/H}$ is the left regular representation of $G/H$ \cite[Proposition 2.38]{Kaniuth2013}.

\subsubsection{Weak containment}
The following definition for weak containment of representations is not standard; however, it is sufficient for our applications.
\begin{defn}[{\cite[Theorem F.4.4]{BHV08}}]
 Let $\pi$ and $\rho$ be unitary representations of $G$. Then $\pi$ \emph{is weakly contained in $\rho$}, denoted by $\pi \prec \rho$, if $\norm{\pi(f)} \leq \norm{\rho(f)}$ for all $f \in L^1(G)$.
\end{defn}

We have the following properties of weak containment and induced representations.
\begin{prop}
 Let $G$ be a locally compact group and $H \leq G$ a closed subgroup. Then
 \begin{enumerate}
  \item $\mathrm{ind}_H^G(\lambda_H)$ is unitarily equivalent to $\lambda_G$ \cite[Corollary 2.52]{Kaniuth2013};
  \item $G$ is amenable if and only if $1_G \prec \lambda_G$ \cite[Theorem G.3.2]{BHV08};
  \item if $\sigma$ and $\rho$ are unitary representations on $H$ and $\sigma \prec \rho$ then $\mathrm{ind}_H^G (\sigma) \prec \mathrm{ind}_H^G (\rho)$ \cite[Theorem F.3.5]{BHV08}.
  \end{enumerate}

\end{prop}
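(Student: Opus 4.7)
My plan is to treat the three items in turn; each is a classical fact fitting naturally into the induced-representation machinery set up in the preceding subsection. The cleanest route for (1) is induction in stages. Observing that $\lambda_H \cong \mathrm{ind}_{\{e\}}^H(1_{\{e\}})$ and $\lambda_G \cong \mathrm{ind}_{\{e\}}^G(1_{\{e\}})$, the transitivity identity $\mathrm{ind}_H^G \circ \mathrm{ind}_{\{e\}}^H \cong \mathrm{ind}_{\{e\}}^G$ yields $\mathrm{ind}_H^G(\lambda_H) \cong \lambda_G$. Verifying transitivity amounts to constructing an explicit unitary between the iterated-induction Hilbert space and the one-step induced space; this is a Fubini-type manipulation using Weil's formula on the coset spaces $G$ and $G/H$ together with careful tracking of the rho-functions.

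For (2), I would invoke Hulanicki's theorem. The forward direction is concrete: if $G$ is amenable, pick a compact F\o{}lner net $F_\alpha$ in $G$, set $\xi_\alpha = \mu_G(F_\alpha)^{-1/2}\mathbf{1}_{F_\alpha} \in L^2(G)$, and verify $\|\lambda_G(g)\xi_\alpha - \xi_\alpha\|_2 \to 0$ uniformly for $g$ in a compact set; pairing with $f \in L^1(G)$ this upgrades to $|\int f\, d\mu_G| \leq \|\lambda_G(f)\|$, i.e.\ $1_G \prec \lambda_G$. For the converse, weak containment supplies almost invariant unit vectors $\xi_\alpha \in L^2(G)$; by the standard pointwise inequality they may be replaced by $|\xi_\alpha|$, and the positive normal states $F \mapsto \langle F\,|\xi_\alpha|, |\xi_\alpha|\rangle$ on $L^\infty(G)$ cluster weak-$*$ at a left-invariant mean.

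For (3), I would argue via matrix coefficients. By Fell's theorem, $\sigma \prec \rho$ is equivalent to saying that every diagonal coefficient of $\sigma$ lies in the closure, in the topology of uniform convergence on compact subsets of $H$, of the cone of finite sums of diagonal coefficients of $\rho$. Induction turns a diagonal coefficient $\phi$ of a unitary representation of $H$ into a diagonal coefficient $\widetilde\phi$ of the induced representation of $G$ via an explicit integral formula over $G/H$ weighted by the rho-function; this map is linear and continuous for uniform convergence on compact subsets. Transporting the approximating sums for coefficients of $\sigma$ then yields $\mathrm{ind}_H^G(\sigma) \prec \mathrm{ind}_H^G(\rho)$.

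The main obstacle I anticipate is the technical bookkeeping of the rho-function in parts (1) and (3). In (1) one must arrange transitivity of induction so that the rho-functions on the various coset spaces match up correctly; in (3) one must verify continuity of the inducing-coefficient map in the compact-open topology, which is the substantive core of Fell's original argument. Once those two points are in hand, the remainder of each item reduces to unwinding definitions.
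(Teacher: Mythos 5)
The paper offers no proof of this proposition at all --- all three items are quoted from the literature (Kaniuth--Taylor for (1), Bekka--de la Harpe--Valette for (2) and (3)) --- so there is nothing to compare against except those references. Your sketches are correct in outline and are exactly the standard arguments found there: induction in stages for (1), the Hulanicki--Reiter argument via F\o{}lner sets and almost invariant vectors for (2), and Fell's continuity of induction via functions of positive type for (3).
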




 We believe the following is well known but we provide a proof as we could not find a reference. 
 \begin{lem} \label{lem:closed normal amenable subgroup C star exact}
  Let $G$ be a locally compact second countable group and suppose $H \leq G$ is a closed normal amenable subgroup. If $G$ is $C^*$-exact then $G/H$ is $C^*$-exact.
 \end{lem}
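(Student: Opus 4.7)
The plan is to produce a surjective $*$-homomorphism $C^*_r(G) \twoheadrightarrow C^*_r(G/H)$; since exactness of $C^*$-algebras passes to quotients, this immediately yields $C^*$-exactness of $G/H$.

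To build the quotient map I would chain together the three properties of weak containment recorded in the previous proposition. Because $H$ is amenable, $1_H \prec \lambda_H$. Induction is monotone with respect to weak containment, hence
$$
\mathrm{ind}_H^G(1_H) \;\prec\; \mathrm{ind}_H^G(\lambda_H) \;\cong\; \lambda_G.
$$
Since $H$ is normal in $G$, the quasi-regular representation $\mathrm{ind}_H^G(1_H)$ is unitarily equivalent to $\lambda_{G/H}\circ q$, where $q\colon G\to G/H$ is the quotient homomorphism. Unpacking the definition of weak containment yields $\|(\lambda_{G/H}\circ q)(f)\|\le\|\lambda_G(f)\|$ for every $f\in L^1(G)$, so the assignment $\lambda_G(f)\mapsto (\lambda_{G/H}\circ q)(f)$ extends to a contractive $*$-homomorphism $\Phi\colon C^*_r(G)\to B(L^2(G/H))$.

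It then remains to identify the image of $\Phi$ with $C^*_r(G/H)$. For this one has to show that every $\lambda_{G/H}(F)$ with $F\in C_c(G/H)$ lies in the image of $\Phi$, which follows from Weil's quotient integral formula: choosing a continuous $\rho$-function for the pair $(G,H)$, the (twisted) averaging map $f\mapsto T_H(\rho^{-1}f)$ carries $C_c(G)$ onto $C_c(G/H)$, and the integrated form $(\lambda_{G/H}\circ q)(f)$ equals $\lambda_{G/H}$ applied to that average. Thus $\Phi$ surjects onto $C^*_r(G/H)$, finishing the argument.

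The main technical obstacle I foresee is the bookkeeping around the Haar measures in this last identification: the $\rho$-function has to be chosen so that the associated quasi-invariant measure on $G/H$ agrees, up to a scalar, with the Haar measure used to define $\lambda_{G/H}$, and when neither $G$ nor $H$ is unimodular the modular-function correction needs to be tracked carefully. Once that identification is in place, the result is a clean combination of weak-containment functoriality with the fact that exactness is inherited by quotients of $C^*$-algebras.
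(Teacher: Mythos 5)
Your argument is correct and is essentially the paper's proof: the same weak-containment chain $1_H\prec\lambda_H$, $\mathrm{ind}_H^G(1_H)\prec\lambda_G$, $\mathrm{ind}_H^G(1_H)\cong\lambda_{G/H}\circ q$, followed by the identification $(\lambda_{G/H}\circ q)(f)=\lambda_{G/H}(T_H(f))$ via Weil's formula and the fact that exactness passes to quotients. The only remark is that your worry about the modular bookkeeping evaporates here: since $H$ is normal one has $\Delta_G|_H=\Delta_H$, so one may take $\rho\equiv 1$ and the ``twisted'' averaging map is just the plain $T_H$, exactly as in the paper.
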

 
 \begin{proof}
  As $H$ is amenable it follows that $1_H \prec \lambda_H$ and so $\mathrm{ind}_H^G(1_H) \prec \lambda_G$. 
  However $\mathrm{ind}_H^G(1_H)$ is unitarily equivalent to $\lambda_{G/H} \circ q$ where $q \colon G \to G/H$ is the quotient map and $\lambda_{G/H}$ is the left regular representation of $G/H$. 
  Hence $\lambda_{G/H} \circ q \prec \lambda_G$ and so it remains to show that the natural map $T_H$ defined by
  \[
 T_H \colon C_c(G) \to C_c(G/H), \quad T_H(f)(gH) = \int_H f(gh) \, d\mu_H(h)
  \]
 extends to a surjective $*$-homomorphism from $C^*_r(G) \to C^*_r(G/H)$. So let $\lambda_{G/H} \circ q \colon C_c(G) \to \mathcal{B}(L^2(G/H))$ be the natural extension of $\lambda_{G/H} \circ q$. That is
  \[
   \lambda_{G/H} \circ q (f) = \int_G f(g) \lambda_{G/H}(gH) \, dg
  \]
  for all $f \in C_c(G)$. We will show that $\lambda_{G/H} \circ q (f) = \lambda_{G/H}(T_H(f))$ for all $f \in C_c(G)$. Then it will follow that
  \[
   \norm{\lambda_{G/H}(T_H(f))} = \norm{\lambda_{G/H} \circ q(f)} \leq \norm{\lambda_G (f)}
  \]
  as $\lambda_{G/H} \circ q \prec \lambda_G$ and so $T_H$ extends to a surjective $*$-homomorphism from $C^*_r(G) \to C^*_r(G/H)$. So for all $f \in C_c(G)$ and $\xi \in C_c(G/H)$ and $yH \in G/H$ we have
  \begin{align*}
   \lambda_{G/H} \circ q (f) \xi (yH) = \int_G f(g)  \xi(g^{-1}yH) \, dg & = \int_{G/H} \int_H f(xh) \xi(h^{-1} x^{-1} y H) \, dh dx \\
   & = \int_{G/H} \int_H f(xh) \xi(x^{-1} y H) \, dh dx
  \end{align*}
where the second equality follows from Weil's integration formula \cite[Corollary 1.21]{Kaniuth2013} and the final equality follows from normality of $H$. Now
\begin{align*}
 \lambda_{G/H}(T_H(f)) \xi (yH) & = \int_{G/H} T_H f(xH) \lambda_{G/H}(xH) \xi(yH) \, d(xH) \\
 & = \int_{G/H} \int_H f(xh) \xi(x^{-1} y H) \, dh dx.
\end{align*}
Hence $\lambda_{G/H} \circ q (f) = \lambda_{G/H} (T_H(f))$ for all $f \in C_c(G)$ and so $T_H$ extends to a surjection. As $C^*_r(G)$ is exact it follows that $C^*_r(G/H)$ is also exact as exactness passes to quotients.
 \end{proof}
 \subsection{Reduction to totally disconnected case}
  
  \begin{lem} \label{lem:unimodular tdlcsc}
  If KW-exactness and $C^*$-exactness are equivalent in the class of unimodular totally disconnected locally compact second countable groups then they are equivalent in the class of totally disconnected locally compact second countable groups
  \end{lem}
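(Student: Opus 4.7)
The plan is to pass from a $C^*$-exact t.d.l.c.s.c.\ group $G$ to the kernel of its modular function $G_0 := \ker \Delta_G$, which is automatically unimodular, apply the hypothesis of the lemma to $G_0$, and then re-assemble via the closed-normal-extension permanence property.

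The crucial---and essentially only non-formal---step is to observe that $G_0$ is \emph{open} in $G$. By van Dantzig's theorem, $G$ admits a compact open subgroup $U$; its image $\Delta_G(U)$ is a compact subgroup of $(\R^{>0},\cdot)$, hence trivial, so $U \subseteq G_0$ and therefore $G_0$ is open. As a closed, open, normal subgroup of a t.d.l.c.s.c.\ group, $G_0$ is itself t.d.l.c.s.c., and by construction it is unimodular.

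Since $G_0$ is open in $G$, the proposition on open subgroups from Section~2.1 transfers $C^*$-exactness from $G$ to $G_0$. The hypothesis of the lemma then yields that $G_0$ is KW-exact. For the quotient, $\Delta_G$ descends to an injective continuous homomorphism $G/G_0 \hookrightarrow (\R^{>0},\cdot)$, so $G/G_0$ is abelian, hence amenable as a locally compact group, and therefore KW-exact by item~(1) of the KW-permanence proposition.

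Finally, applying item~(3) of that proposition to the short exact sequence $1 \to G_0 \to G \to G/G_0 \to 1$ delivers KW-exactness of $G$. No serious obstacle is anticipated: once the openness of $G_0$ is in hand, everything else is a direct bookkeeping with the permanence properties stated earlier.
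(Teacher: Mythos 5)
Your proposal is correct and follows essentially the same route as the paper: pass to the open unimodular kernel $G_0=\ker\Delta_G$, transfer $C^*$-exactness to $G_0$ via openness, apply the hypothesis, and conclude by extension permanence since $G/G_0$ is abelian. The only cosmetic difference is how openness of $G_0$ is justified (you note a subgroup containing a compact open subgroup is open, while the paper argues via positive Haar measure), which changes nothing of substance.
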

  
  \begin{proof}
   Let $G$ be a totally disconnected locally compact second countable group and suppose $G$ is $C^*$-exact. Let $G_0 = \mathrm{ker}(\Delta)$. In particular $G_0$ is a closed normal unimodular subgroup of $G$. As $G$ is totally disconnected, there exists a compact open subgroup $K \leq G$. Compact groups are unimodular it follows that $\Delta|_K = 1$. Hence $K \leq G_0$ and so $\mu_G(G_0) \geq \mu_G(K) > 0$ and so $G_0$ is open. Thus as $G$ is $C^*$-exact it follows that $G_0$ is $C^*$-exact.
   
   By assumption this implies that $G_0$ is KW-exact and as $G / G_0$ is abelian it follows that $G$ is also KW-exact.
  \end{proof}

We are now ready to prove the main result of this section.
\begin{proof}[Proof of Theorem \ref{thm: main result of reduction to tdlc case}]
 Let $G$ be a $C^*$-exact locally compact second countable group. Let $\mathrm{Rad}(G)$ be the amenable radical of $G$. Then by Lemma \ref{lem:closed normal amenable subgroup C star exact} it follows that $G / \mathrm{Rad}(G)$ is $C^*$-exact. By Theorem \ref{thm:structure theorem on quotient by amenable radical} there exists an open normal finite index subgroup $N \leq G / \mathrm{Rad}(G)$ such that $N \cong S \times D$ where $S$ is a connected semisimple Lie group and $D$ is totally disconnected. We have the tensor decomposition where $C^*_r(N) \cong C^*_r(S) \otimes C^*_r(D)$. As $C^*_r(N)$ is exact if follows that $C^*_r(D)$ is also exact \cite[Proposition 10.2.7]{Brown2008}.
 
 By assumption and by Lemma \ref{lem:unimodular tdlcsc} this implies that $D$ is KW-exact. As connected locally compact groups are KW-exact \cite[Theorem 6.8]{Kirchberg1999a} and KW-exactness is preserved under extensions \cite[Theorem 5.1]{Kirchberg1999a} by closed normal subgroups it follows that $N$ is KW-exact. We know $N$ is open so in particular it is closed in $G/ \mathrm{Rad}(G)$. Further $N$ is cocompact in $G / \mathrm{Rad}(G)$ so $G / \mathrm{Rad}(G)$ is KW-exact. As $\mathrm{Rad}(G)$ is a closed normal and amenable subgroup of $G$ and KW-exactness is preserved under extensions it follows that $G$ is KW-exact.
\end{proof}

\bibliographystyle{plain}
\bibliography{bibliography}
\end{document}